\title{Some $i$\textsc{-Mark} games}
\date{}
\author{Oren Friman\thanks{\texttt{orenfriman@gmail.com}. Ariel University, Ariel, Israel.} \and Gabriel Nivasch\thanks{ \texttt{gabrieln@ariel.ac.il}. Ariel University, Ariel, Israel.}}
\DeclareMathOperator{\SG}{SG}
\DeclareMathOperator{\mex}{mex}
\DeclareMathOperator{\opt}{opt}
\newcommand{\imark}{i\textsc{-Mark}}
\newcommand{\boldimark}{\texorpdfstring{$i$-Mark}{i-Mark}}
\newcommand{\Mark}{\textsc{Mark}}
\newcommand{\markt}{\textsc{Mark-}t}
\newcommand{\upmark}{\textsc{UpMark}}
\newcommand{\N}{{\mathbb N}}
\newtheorem{theorem}{Theorem}
\newtheorem{lemma}[theorem]{Lemma}
\newtheorem{conjecture}[theorem]{Conjecture}
\begin{document}
\maketitle

\begin{abstract}
Let $S$ be a set of positive integers, and let $D$ be a set of integers larger than $1$. The game $\imark(S,D)$ is an impartial combinatorial game introduced by Sopena (2016), which is played with a single pile of tokens. In each turn, a player can subtract $s \in S$ from the pile, or divide the size of the pile by $d \in D$, if the pile size is divisible by $d$. Sopena partially analyzed the games with $S=[1, t-1]$ and $D=\{d\}$ for $d \not\equiv 1 \pmod t$, but left the case $d \equiv 1 \pmod t$ open. 

\begin{sloppypar}
We solve this problem by calculating the Sprague--Grundy function of $\imark([1,t-1],\{d\})$ for $d \equiv 1 \pmod t$, for all $t,d \geq 2$. We also calculate the Sprague--Grundy function of $\imark(\{2\},\allowbreak\{2k + 1\})$ for all $k$, and show that it exhibits similar behavior. Finally, following Sopena's suggestion to look at games with $|D|>1$, we derive some partial results for the game $\imark(\{1\}, \{2, 3\})$, whose Sprague--Grundy function seems to behave erratically and does not show any clear pattern. We prove that each value $0,1,2$ occurs infinitely often in its SG sequence, with a maximum gap length between consecutive appearances.
\end{sloppypar}

Keywords: Combinatorial game; subtraction-division game; Sprague--Grundy function
\end{abstract}

\section{Introduction}

The game of $\imark$ is an impartial combinatorial game introduced by Sopena~\cite{SOPENA201690}. Given a nonempty set $S$ of positive integers and a nonempty set $D$ of integers larger than $1$, the game $\imark(S,D)$ is played with a single pile of tokens. In each turn, a player can subtract $s \in S$ from the current pile, or divide the size of the pile by $d \in D$, if the pile size is divisible by $d$. Following the normal convention on impartial games, we will assume that the last player to move is the winner.

\subsection{Other subtraction-division games}

The game of $\imark$ is a \emph{subtraction-division game} (where the \emph{i} stands for \emph{integral}). The first subtraction-division game studied was $\Mark$ (Berlekamp and Buhler~\cite{PUUZZLE2009}), in which the pile size~$n$ can be replaced by $n-1$ or $\lfloor n/2\rfloor$. This game was analyzed by Fraenkel~\cite{APERIODIC,VILEANDDOPEY} and Guo~\cite{guo2012winning}. Fraenkel also studied a variant $\upmark$ in which one rounds up instead of down, as well as the game $\markt$, in which one can subtract any number between $1$ and $t-1$ or divide by $t$ rounding down.

One can define a general class of games $SD(S,D)$, in which the pile size $n$ can be replaced by $n-s$ for $s\in S$ or $\lfloor n/d \rfloor$ for $d\in D$. Hence, $\Mark=SD(\{1\},\{2\})$, and $\markt=SD([1,t-1],\{t\})$. 

Kupin~\cite{kupin2011subtraction} considered a variant of $SD(S,D)$ in which the division move is to $\lceil n/d\rceil$ and the game ends at $n=1$.

\subsection{Background on impartial games}

An impartial game can be abstractly represented by a directed acyclic graph $G=(V,E)$, in which the vertices $V$ represent positions and the directed edges $E$ represent legal moves. If $(v,w)\in E$ we say that $w$ is an \emph{option} of $v$. We denote the set of options of $v$ by $\opt(v)$.

The positions in an impartial game are classified into \emph{$P$-positions} and \emph{$N$-positions}. From a $P$-position, if both players play optimally then the second (or \emph{previous}) player will win, whereas from an $N$-position, the first (or \emph{next}) player will win. The $P$- or $N$-status of a position is known as its \emph{outcome}. Under the normal play convention (in which the last player to move is the winner), the outcome is recursively characterized as follows: A position is a $P$-position if and only if all its options are $N$-positions. In particular, terminal positions are $P$-positions.

Given games $G_1, \ldots, G_n$, their \emph{sum} $G = G_1 + \cdots + G_n$ is defined as the game with the following rules: In each turn, a player chooses some $G_i$ and makes some legal move on it, leaving all other games untouched. The game $G$ ends when all the component games $G_i$ end.

Knowing the outcomes of the component games $G_i$ is not enough to determine the outcome of their sum. In fact, if $G=G_1+G_2$ and $v_1\in G_1$, $v_2\in G_2$ are both $N$-positions, then $(v_1,v_2)\in G$ could be either a $P$- or an $N$-position.

In order to be able to optimally play sums of games (under normal play convention), one needs a generalization of the notion of $P$- and $N$-positions known as the \emph{Sprague--Grundy function} (or \emph{SG function} for short) \cite{GRUNDY,SPRAGUE}. The SG value of a position in a game is a nonnegative integer. Given a finite set $S\subset \N$, define $\mex(S) = \min{(\N\setminus S)}$ (mex stands for \emph{minimum excluded value}). Then, for each position $v\in G$  recursively define
\begin{equation}\label{eq_S_def}
\SG(v) = \mex\{\SG(w):w\in\opt(v)\}.
\end{equation}
The SG function has the following two important properties:
\begin{itemize}
\item $\SG(v)=0$ if and only if $v$ is a $P$-position.
\item Let $a\oplus b$ denote the bitwise XOR of $a$ and $b$ written in binary. Let $v = (v_1, \ldots, v_n)\in G_1+\cdots+G_n$. Then $\SG(v) = \SG(v_1) \oplus \cdots \oplus \SG(v_n)$.
\end{itemize}

Hence, knowledge of the SG function of the component games allows us to optimally play their sum.

\subsection{Polynomial-time algorithms}

Let $G$ be a game played on a single pile of tokens. A position $n$ in the game can be represented with $O(\log n)$ bits. Hence, computing $\SG(n)$ using the definition (\ref{eq_S_def}) takes exponential time in the size of the input. In order to be able to play the game efficiently, we need a polynomial-time algorithm for computing $\SG(n)$. One way to do this is to find an explicit characterization of $\SG(n)$, which immediately yields a polynomial-time algorithm.

\subsection{\boldmath Sopena's results on \boldimark}

In~\cite{SOPENA201690}, Sopena presented several results on $\imark$. Regarding the normal-play version, he proved that, although the SG sequence of $\imark$ is aperiodic for every $S$ and $D$, the outcome sequence for $\imark([1, t - 1],\{d\})$ with $t, d \geq 2$ and $d \not\equiv 1 \pmod t$  is periodic, with set of $P$-positions equal to
\begin{equation*}
    \mathcal P = \{ qt \mid 0 \leq q < d \} \cup \{ qt + 1 \mid q \geq d \}.
\end{equation*}
Sopena also proved that the Sprague--Grundy sequence is \emph{1-almost periodic} if $d = 2$ or $d = t$, and that for these cases the characterization can be calculated by a linear-time algorithm.
He also studied the games $\imark(\{a, 2a\},\{2\})$ for $a \geq 1$. Sopena raised several open questions, which include among them:
\begin{itemize}
\item Do there exist $S$ and $D$ for which the outcome sequence of $\imark(S,D)$ is not periodic?
\item What can be said about $\imark([1, t - 1],\{d\})$ for $d\equiv 1\pmod t$?
\item What can be said about games $\imark(S,D)$ with $|D|>1$?
\end{itemize}

\subsection{Our contributions}
In this paper we address the above-mentioned questions raised by Sopena. First, we give a complete characterization of the SG sequence of $\imark([1, t - 1],\{d\})$ for $d\equiv 1\pmod t$ and $t,d\ge 2$. As we show, the pile sizes whose SG value is $t$ form two geometric sequences with ratio $d$, while the values $0, \ldots, t - 1$ repeat in a periodic pattern between them. See Figure~\ref{fig_patterns}(i) and Section~\ref{sec_t}.

Our second contribution concerns games of the form $\imark(\{2\},\{k\})$ for $k$ odd. Since in these games each position has at most two options, their SG function is bounded by $2$. We find that the positions with SG value $2$ form one or two geometric progressions, similarly to the case above. Some of these phenomena were previously observed by Choen and Gerchikov, as part of an undergraduate project \cite{cg_report}. See Figure~\ref{fig_patterns}(ii--iii) and Section~\ref{sec_2}.

In all the above games, the outcome sequence is aperiodic.

In Section~\ref{sec_2} we also state a conjecture which partially characterizes the SG sequence of all games of the form $\imark(\{s\},\{d\})$ for $s,d$ relatively prime.

Our third and final contribution concerns the game $\imark(\{1\},\{2, 3\})$. This is arguably the simplest instance of a game in which $\lvert D \rvert > 1$. The SG sequence of this game seems to behave erratically, and we were unable to identify a precise pattern. We still try to establish some basic properties of this sequence, in the same spirit as previous works on other games with seemingly chaotic SG functions, such as the game of Wythoff \cite{nivasch2005more}, and the 3-row case of the game of Chomp \cite{friedman2005geometry}.
We observe experimentally that for each $0 \leq i \leq 3$, instances of the value $i$ in the sequence are never missing for too long, meaning, for every $i$ there seems to exist a constant $c_i$ such that every interval $[n + 1, n + c_i]$ contains a position with SG value $i$. We prove the existence of $c_0, c_1$ and $c_2$, we leave the existence of $c_3$, as well as a more precise characterization of the sequence, as open problems. See Section~\ref{sec_123}.

\section{\boldmath Analysis of \boldimark\texorpdfstring{${([1, t - 1],\{d\})}$}{([1,t-1],\{d\})} for \texorpdfstring{${d\equiv 1\pmod t}$}{d=1 (mod t)}}\label{sec_t}

\begin{sidewaysfigure}
\centerline{\includegraphics{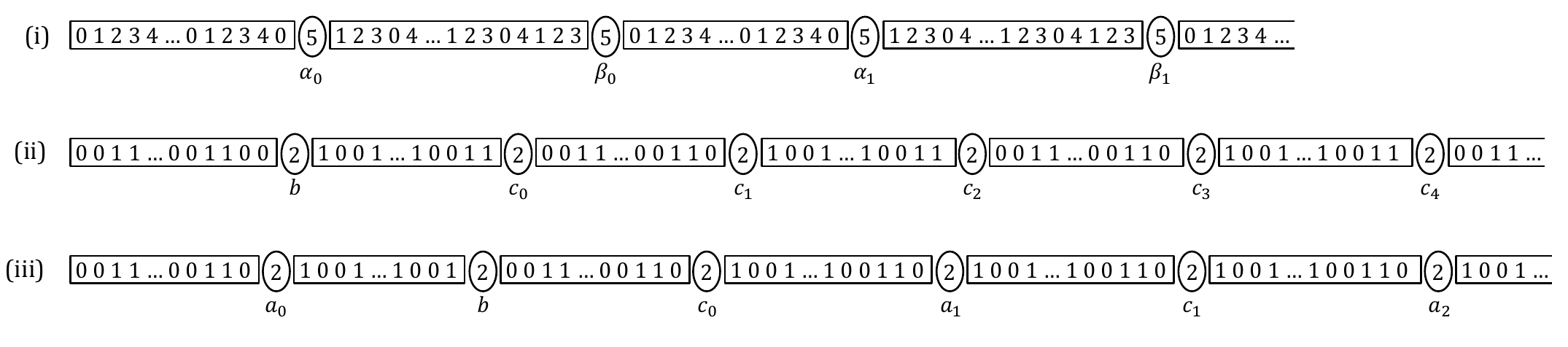}}
\caption{\label{fig_patterns}(i) SG sequence of the game $\imark([1,t-1],\{d\})$, for $d\equiv 1 \pmod t$. Here $t=5$. (ii) SG sequence of the game $\imark(\{2\},\{k\})$, for $k\equiv 3 \pmod 4$. (iii) SG sequence of the game $\imark(\{2\},\{k\})$, for $k\equiv 1 \pmod 4$.}
\end{sidewaysfigure}

\begin{theorem}
Let $\SG(n)$ denote the Sprague--Grundy value of a pile of $n$ tokens in the game $\imark([1,t-1],\{d\})$, where $t,d\ge 2$ and $d\equiv 1 \pmod t$. For $m\ge 0$ let
\begin{align*}
\alpha_m &= d + \cdots + d^{m+1} = (d^{m+2}-d)/(d-1),\\
\beta_m &= d + \cdots + d^m + td^{m+1} = td^{m+1} + (d^{m+1}-d)/(d-1).
\end{align*}
Note that $\alpha_0 < \beta_0 < \alpha_1 < \beta_1 < \cdots$, and that $\alpha_m \equiv m+1 \pmod t$ and $\beta_m \equiv m \pmod t$. Let
\begin{align*}
A_0 &= [0,\alpha_0-1];\\
A_m &= [\beta_{m-1} + 1, \alpha_m-1],\qquad m\ge 1;\\
B_m &= [\alpha_m+1,\beta_m-1],\qquad m\ge 0.
\end{align*}
Then:
\begin{itemize}
\item $\SG(\alpha_m) = \SG(\beta_m) = t$ for all $m\in\N$.
\item The SG sequence of $A_m$ is
\begin{equation*}
(0,1,\ldots, t-1)^{z_m}, 0,
\end{equation*}
for $z_m = (\alpha_m - \beta_{m-1} - 2)/t$.
\item The SG sequence of $B_m$ is
\begin{equation*}
(1,2, \ldots, t-2,0,t-1)^{z'_m},1,\ldots,t-2,
\end{equation*}
for $z'_m  = (\beta_m  - \alpha_m - t + 1)/t$.
\end{itemize}
Here $X^z$ denotes $z$-fold repetition of the sequence $X$.
\end{theorem}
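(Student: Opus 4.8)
The plan is to read off a candidate function $g(n)$ from the claimed pattern and prove by strong induction on $n$ that $g$ satisfies the defining recurrence $\SG(n)=\mex\{\SG(w):w\in\opt(n)\}$; since on a DAG this recurrence has a unique solution, that gives $g=\SG$. Explicitly, $g$ assigns $t$ to every $\alpha_m$ and $\beta_m$; on $A_m$ it is $g(n)=(n-m)\bmod t$; and on $B_m$ it is the period-$t$ function reading $1,2,\dots,t-2,0,t-1$, with the convention $\beta_{-1}=-1$ so that $A_0$ is treated uniformly. I would first record the routine arithmetic: that $d\equiv 1\pmod t$ yields $\alpha_m\equiv m+1$ and $\beta_m\equiv m\pmod t$ and the integrality of $z_m,z'_m$ (via $t\mid d^{k}-1$); the two recurrences $\alpha_m=d(\alpha_{m-1}+1)$ and $\beta_m=d(\beta_{m-1}+1)$, together with $\alpha_0=d$, $\beta_0=td$; and that every block has length at least $t$, so that any window $[n-t+1,n]$ of $t$ consecutive positions contains at most one landmark. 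Since consecutive blocks are always separated by a landmark, a window containing no landmark lies inside a single block.

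The structural heart is a \emph{Division Lemma}: for every $n$ divisible by $d$, $g(n/d)\ne g(n)$. Since $d\equiv 1\pmod t$ we have $n/d\equiv n\pmod t$. Using $\alpha_m=d(\alpha_{m-1}+1)$ and $\beta_m=d(\beta_{m-1}+1)$, one checks that division by $d$ sends a multiple of $d$ lying in $A_m$ into $A_{m-1}\cup\{\alpha_{m-1}\}$, a multiple lying in $B_m$ into $B_{m-1}\cup\{\beta_{m-1}\}$, the landmark $\alpha_m$ to the first position of $B_{m-1}$ (value $1$), and the landmark $\beta_m$ to the first position of $A_m$ (value $0$). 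In the two landmark cases $g(n)=t$ while $g(n/d)\in\{0,1\}$. In the remaining cases the image is either again a landmark (value $t\ne g(n)$) or lies one block lower at the same residue mod $t$: on $A$-blocks this gives $g(n/d)=(g(n)+1)\bmod t$, and on $B$-blocks the two positions correspond to residues of the $B$-permutation differing by $1$, so injectivity of that permutation forces $g(n/d)\ne g(n)$.

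With the lemma in hand I would verify the mex recurrence case by case according to the nearest landmark in $[n-t+1,n]$. When the window lies inside a single block, the $t-1$ subtraction options $n-1,\dots,n-(t-1)$ realise exactly $\{0,\dots,t-1\}\setminus\{g(n)\}$ (each block's period being a permutation of $\{0,\dots,t-1\}$), so the subtraction options alone force $\mex=g(n)$, and the Division Lemma guarantees the division option, if present, does not fill the missing value. At a landmark the subtraction window covers $\{0,\dots,t-1\}$ except a single value ($1$ at $\alpha_m$, $0$ at $\beta_m$), and the divisibility relations $\alpha_m=d(\alpha_{m-1}+1)$, $\beta_m=d(\beta_{m-1}+1)$ make the division option supply exactly that value, raising $\mex$ to $t$. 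The final cases are the positions within $t-1$ of a landmark from the right, where the window straddles the landmark (value $t$) and the tail of the adjacent block; writing $n=\alpha_m+r$ or $n=\beta_{m-1}+r$ with $1\le r\le t-1$ and listing the option values, one finds the subtraction options miss exactly a pair of consecutive values whose minimum equals $g(n)$, so again $\mex=g(n)$, with the Division Lemma controlling any division option. The base cases near $0$, where some subtraction moves are illegal, are immediate.

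I expect the main obstacle to be the straddling cases in the third paragraph: the interaction between the value $t$ at a landmark and the two \emph{different} periodic patterns on the adjacent $A$- and $B$-blocks has to be tracked residue by residue, and it is precisely this interaction that forces the $B$-pattern to be the shifted sequence $1,2,\dots,t-2,0,t-1$ rather than $0,1,\dots,t-1$. The other delicate point is verifying the Division Lemma uniformly across all block boundaries, including the small-index edge cases $m=0$ and $m=1$ where the recurrences for $\alpha_m,\beta_m$ degenerate.
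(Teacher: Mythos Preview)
Your proposal is correct and follows essentially the same route as the paper: strong induction on $n$, the recurrences $\alpha_m=d(\alpha_{m-1}+1)$ and $\beta_m=d(\beta_{m-1}+1)$ to locate division options, and a case split into landmarks, positions within $t-1$ of a landmark, and positions deep inside a block. Your ``Division Lemma'' $g(n/d)\ne g(n)$ is a slightly weaker but sufficient repackaging of what the paper states more directly (that the division option's value coincides with one of the subtraction options' values, or equals $t$). One small slip: in the straddling case $n=\beta_{m-1}+r$ the two values missing from the subtraction options are $r-1$ and $t-1$, not a consecutive pair; the conclusion $\mex=g(n)=r-1$ still holds, so this does not affect the argument.
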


\begin{proof}
By induction on $n$. Since $\alpha_0=d$, if $n\in A_0$ then $n$ has only subtraction options, and the claim follows easily.

If $n=\alpha_0$ then $n/d=1$, which has SG value $1$, so $\SG(\alpha_0) = t$. If $n=\beta_0=td$ then the subtraction options of $n$ have SG values $t-1,1,\ldots,t-2$, while its division option is $t\in A_0$, which has SG value $0$. Hence, $\SG(\beta_0)=t$.

Suppose $n$ equals $\alpha_m$ or $\beta_m$ for $m\ge 1$. The numbers $\alpha_m, \beta_m$ are divisible by $d$, and they satisfy
\begin{equation}\label{eq_alpha_beta_divide}
\alpha_{m+1}/d = \alpha_m+1,\qquad \beta_{m+1}/d = \beta_m+1.
\end{equation}
Therefore, the subtraction options of $\alpha_m$ have SG values $2,\ldots, t-1, 0$, while its division option has SG value $1$, so $\SG(\alpha_m) = t$. Similarly, the subtraction options of $\beta_m$ have SG values $t-1,1,\ldots,t-2$, while its division option has SG value $0$, so $\SG(\beta_m) = t$.

Next, suppose $n=\alpha_m+i$ for $1\le i \le t-2$. Then the subtraction options of $n$ are the last $t-1-i$ elements of $A_m$; $\alpha_m$; and the first $i-1$ elements of $B_m$. Their respective SG values are $i+2, \ldots, t-1,0$; $t$; and $1,\ldots, i-1$. Hence, $\SG(n) = i$.

The subtraction options of $n=\alpha_m+t-1$ are $\alpha_m$ and the first $t-2$ elements of $B_m$. Their respective SG values are $t,1,\ldots,t-2$. Hence, $\SG(\alpha_m+t-1) = 0$.

Similarly, for $1\le i\le t-1$, the subtraction options of $n=\beta_m+i$ are the last $t-1-i$ elements of $B_m$; $\beta_m$; and the first $i-1$ elements of $A_{m+1}$. Their respective SG values $i,\ldots,t-2$; $t$; and $0, \ldots, i-2$. Hence, $\SG(n) = i-1$.

Next, suppose $n$ is the $i$th element of $A_m$ or $B_m$ for $i\ge t$. Suppose first that $n$ is not divisible by $d$. Then the subtraction options of $n$ are the $t-1$ preceding elements of $A_m$ or $B_m$. Their SG values equal $\{0,\ldots,t-1\}\setminus\{j\}$ for some $j$, and so $\SG(n)=j$.

Now suppose $n$ is divisible by $d$. Since the SG sequences of $A_m, B_m$ are periodic with period $t$, and since $d\equiv 1 \pmod t$, advancing $d$ steps in $A_m$ or $B_m$ is equivalent to advancing one step. By (\ref{eq_alpha_beta_divide}) and induction on $i$, it follows that if the $i$th element of $A_m$ (resp.~$B_m$) is divisible by $d$, then its division option is in a position congruent to $i+1$ modulo $t$ in $A_{m-1}$ (resp.~$B_{m-1}$). Therefore, if $n\neq \alpha_m-d$ (resp.~$n\neq \beta_m-d$), then the SG value of the division option of $n$ is included in the SG values of the subtraction options of $n$, so it has no effect on $\SG(n)$. Finally, if $n$ equals $\alpha_m-d$ (resp.~$\beta_m-d$) then $n/d$ equals $\alpha_{m-1}$ (resp.~$\beta_{m-1}$), whose SG value is $t$, which again has no effect on $\SG(n)$.
\end{proof}

\section{\boldmath Analysis of \boldimark\texorpdfstring{$(\{2\},\{k\})$}{(\{2\},\{k\})} for \texorpdfstring{$k$}{k} odd}\label{sec_2}

Here the behavior depends on whether $k\bmod 4$ equals $1$ or $3$.

\begin{theorem}
Let $k\equiv 3 \pmod 4$, and let $\SG(n)$ denote the Sprague--Grundy value of a pile of $n$ tokens in the game $\imark(\{2\},\{k\})$. Let $b = 2k$, $c_0 = 4k$, and $c_m = k(c_{m-1}+2)$ for $m\ge 1$. Let
\begin{align*}
I_0 &= [0,b-1];\\
I_1 &= [b+1,c_0-1];\\
I_m &= [c_{m-2} + 1, c_{m-1}-1],\qquad m\ge 2.
\end{align*}
Then:
\begin{itemize}
\item $\SG(b)=2$ and $\SG(c_m) = 2$ for all $m\in\N$.
\item The SG sequence of $I_0$ is $(0,0,1,1)^{z_0},0,0$ for $z_0=(b-2)/4$.
\item For $m$ odd, the SG sequence of $I_m$ is $(1,0,0,1)^{z_m},1$, where $z_1 = (c_0-b-2)/4$ and $z_m = (c_{m-1} - c_{m-2} - 2)/4$ for $m\ge 3$.
\item For $m\ge 2$ even, the SG sequence of $I_m$ is $(0,0,1,1)^{z_m},0$ for $z_m = (c_{m-1} - c_{m-2} - 2)/4$.
\end{itemize}
\end{theorem}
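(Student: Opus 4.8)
The plan is to mirror the proof of the preceding theorem: argue by strong induction on $n$, checking that the claimed values are consistent with the recursion $\SG(n)=\mex\{\SG(n-2),\SG(n/k)\}$, where the second option is present only when $k\mid n$. Since every position has at most two options, $\SG(n)\in\{0,1,2\}$, and a position attains the value $2$ exactly when it has two options whose SG values are $\{0,1\}$. The structural fact to exploit is that both claimed patterns $(0,0,1,1)$ and $(1,0,0,1)$ have period $4$, so inside each interval $\SG(n)$ is determined by $n\bmod 4$ together with the parity of the interval index (giving two interval ``types''). I would first record the base cases $I_0$, $b$, $c_0$, and $I_1$ by direct computation, and separately verify the short transition at the left end of each interval, where $n-2$ is either a boundary or the last element of the previous interval; these checks merely confirm that the period-$4$ blocks splice together correctly across boundaries.

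The bulk of the argument then splits into the same cases as before. For a boundary $n=c_m$ (with $m\ge 1$) I would evaluate its two options: the subtraction option $c_m-2$ is the second-to-last element of the preceding interval, and by the defining recurrence $c_m=k(c_{m-1}+2)$ the division option is $c_m/k=c_{m-1}+2$, the \emph{second} element of that same interval; reading the two relevant pattern entries shows their SG values are $\{0,1\}$, so $\SG(c_m)=2$. For an interior position $n$ not divisible by $k$, the only option is $n-2$, and since each of $(0,0,1,1)$ and $(1,0,0,1)$ is self-consistent under $a\mapsto\mex\{a\}$, the recursion reproduces the claimed pattern.

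The heart of the proof, and the step I expect to be the main obstacle, is an interior position $n$ divisible by $k$. Here there are two options, so I must show the division option is \emph{inert}, i.e. that adjoining $\SG(n/k)$ to the mex does not change the value dictated by the subtraction option alone. A short check of the three possibilities for $\SG(n-2)$ shows this is equivalent to the single inequality that $\SG(n/k)$ differs from the claimed value of $\SG(n)$. To establish it I would use $k\equiv 3\equiv -1\pmod 4$, so that $q=n/k$ satisfies $n\equiv -q\pmod 4$. If $q$ is odd then $n$ and $q$ have opposite odd residues mod $4$, and on odd residues the two interval types agree, sending residues $1$ and $3$ to $0$ and $1$; the inequality is then automatic. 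If $q$ is a boundary then $\SG(q)=2$, again automatic. The remaining case is $q$ even and interior, where everything hinges on a range analysis: using $c_{m-1}/k=c_{m-2}+2$ I would show that $q$ lands inside $I_{m-1}$ (or, for the small index $m=2$, inside $I_0\cup I_1$), an interval whose type is the \emph{opposite} of that of $I_m$. Since the residue-to-SG maps of the two types disagree on both even residues, this type flip combined with $n\equiv -q\pmod 4$ forces the required inequality.

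The delicate points, where I would spend the most care, lie precisely in pinning down which interval contains $q$ in the even-interior case, for every $k$ and every $m$. One must verify that the image of $I_m$ under division by $k$ never dips into $I_{m-2}$ (which has the \emph{same} type as $I_m$ and would break the argument): the smallest quotient is $c_{m-3}+3\in I_{m-1}$ for $m\ge 3$, while the largest is $c_{m-2}+1$, the odd first element of $I_m$ itself. One must also confirm that the genuinely special index $m=2$ still works, which it does only because $I_0$ happens to share the type of the odd intervals. All of this is elementary interval arithmetic from $c_m=k(c_{m-1}+2)$ and the parities $c_m\equiv 0\pmod 4$ for $m$ even and $c_m\equiv 2\pmod 4$ for $m$ odd, but it is exactly where an off-by-one slip would be fatal.
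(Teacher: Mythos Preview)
Your proposal is correct and follows essentially the same approach as the paper: strong induction on $n$, a finite base check, direct evaluation of both options at each boundary $c_m$, and for interior multiples of $k$ the verification that the division option never changes the mex, using that $k\equiv -1\pmod 4$ together with the period-$4$ structure of the claimed patterns. Your packaging via ``interval types'' and the residue map $n\bmod 4\mapsto\SG(n)$ is a slightly cleaner reformulation of the paper's explicit step-by-step tracking of the subtraction and division options as $n$ runs through multiples of $k$, but the underlying case split (quotient in $I_{m-1}$, quotient equal to a boundary, quotient equal to the first element of $I_m$; plus the special landing in $I_0\cup I_1$ when $m=2$) is identical.
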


\begin{proof}
The claim up to $n=c_0$ is easily checked.

Consider $n=c_m$, $m\ge 1$. Its subtraction option has SG value $1$, and its division option is the second term of $I_{m+1}$, which has SG value $0$, so $\SG(c_m) = 2$.

Now suppose $n\in I_m$, $m\ge 2$. If $n$ is not divisible by $k$ then only the subtraction option of $n$ is present, and the claim is easily checked (including the cases in which $n-2$ equals $c_{m-2}$ or belongs to $I_{m-1}$).

Hence, suppose $n$ is divisible by $k$, and let $z=n/k$ be the division option of $n$. If $m=2$ then we can have $z\in I_0$ (if $n<2k^2$), or $z=b$ (if $n=2k^2)$, or $z\in I_1$ (if $2k^2<n<c_1-2k$), or $z = c_0$ (if $n=c_1-2k$), or $z=c_0+1\in I_2$ (if $n=c_1-k$). If $m\ge 3$ then only the last three options are possible, namely $z\in I_{m-1}$ (if $n<c_{m-1}-2k$), or $z=c_{m-2}$ (if $n = c_{m-1} - 2k$), or $z = c_{m-2}+1\in I_m$ (if $n=c_{m-1}-k$).

Since the SG sequences of $I_m$ and $I_{m-1}$ are periodic with period $4$, and $k\equiv 3 \pmod 4$, increasing $n$ by $k$ in $I_m$ is equivalent to increasing it by $3$, and then $z$ increases by $1$.

Thus, when $m=2$ and $n$ starts at $c_0+k = 5k$, the subtraction option of $n$ goes through the SG values $0,1,1,0,\ldots$, and its division option $z\in I_0$ also goes through SG values $0,1,1,0,\ldots$, which implies that $\SG(n)$ goes through the values $1,0,0,1,\ldots$, as desired. Once $n=2k^2\equiv 2 \pmod 4$, its subtraction option has SG value $1$, while its division option $z=b$ has SG value $2$, so $\SG(n) = 0$, as desired. As $n$ continues from $2k^2+k$, the subtraction option of $n$ goes through the SG values $1,0,0,1,\ldots$, and its division option $z\in I_1$ also goes through SG values $1,0,0,1,\ldots$, which implies that $\SG(n)$ goes through the values $0,1,1,0,\ldots$, as desired. Once $n$ reaches $c_1-2k$, its subtraction option has SG value $0$, and its division option $z=c_0$ has SG value $2$, so $\SG(n) = 1$. Finally, when $n = c_1-k$, its subtraction option has SG value $0$, while $\SG(z) = \SG(c_0+1) = 0$, so $\SG(n) = 1$.

When $m\ge 3$ is odd, the subtraction option of $n$ goes through SG values $1,1,0,0,\ldots$, and its division option also goes through SG values $1,1,0,0,\ldots$, which implies that $\SG(n)$ goes through the values $0,0,1,1,\ldots$, as desired. Once $n$ reaches $n = c_{m-1}-2k\in I_m$, the subtraction option of~$n$ has SG value $0$, while its division option is $c_{m-2}$ which has SG value $2$, so $\SG(n) = 1$. When $n = c_{m-1} - k \in I_m$, the subtraction option of $n$ has SG value $1$, while its division option is the first element of $I_m$, which also has SG value $1$, so $\SG(n)=0$ as desired.

The case $m\ge 4$ even is like the last three subcases of $m=2$.
\end{proof}

\begin{theorem}
Let $k\equiv 1 \pmod 4$, $k>1$, and let $\SG(n)$ denote the Sprague--Grundy value of a pile of $n$ tokens in the game $\imark(\{2\},\{k\})$. Let $b = 2k$, $c_0 = 4k$, and $c_m = k(c_{m-1}+2)$ for $m\ge 1$. Let $a_0 = k$ and $a_m = k(a_{m-1}+2)$ for $m\ge 1$. Note that
\begin{equation*}
a_0 < b < c_0 < a_1 < c_1 < a_2 <\cdots,
\end{equation*}
since $a_i - c_j = k(a_{i-1} - c_{j-1})$. Let
\begin{align*}
X &= [0,a_0-1];\\
Y &= [a_0+1,b-1];\\
Z &= [b+1,c_0-1];\\
A_m &= [c_{m-1} + 1, a_m-1],\qquad m\ge 1;\\
C_m &= [a_m+1,c_m-1],\qquad m\ge 1.
\end{align*}
Then:
\begin{itemize}
\item $\SG(b) = 2$, and $\SG(a_m) = \SG(c_m) = 2$ for all $m\in\N$;
\item The SG sequence of $X$ is $(0,0,1,1)^z,0$ for $z=(a_0-1)/4$.
\item The SG sequence of $Y$ is $(1,0,0,1)^{z'}$ for $z' = (b-a_0-1)/4$.
\item The SG sequence of $Z$ is $(0,0,1,1)^{z''},0$ for $z'' = (c_0 - b - 2)/4$.
\item For every $m\ge 1$, the SG sequence of $A_m$ is $(1,0,0,1)^{y_m},1,0$ for $y_m = (a_m - c_{m-1} - 3)/4$.
\item For every $m\ge 1$, the SG sequence of $C_m$ is $(1,0,0,1)^{y'_m},1,0$ for $y'_m = (c_m - a_m - 3)/4$.
\end{itemize}
\end{theorem}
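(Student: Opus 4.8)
The plan is to prove the theorem by induction on $n$, following the same scheme as the two preceding theorems, but with the period-$4$ synchronization now keyed to the residue $k\equiv 1\pmod 4$. Since every position has at most two options (subtract $2$, or divide by $k$ when $k\mid n$), every SG value lies in $\{0,1,2\}$, and the value $2$ can occur only when the two options carry the values $0$ and $1$. The base case consists of verifying the claim directly for all $n\le c_0$; this is a bounded computation covering the intervals $X,Y,Z$ together with the special points $a_0,b,c_0$, and it already contains the single division point $n=3k\in Z$, whose target $z=3\in X$ must be checked so that the induction can take over for $n>c_0$.

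First I would dispose of the special points. For $m\ge 1$ the subtraction option $a_m-2$ is the penultimate entry of $A_m$, whose sequence ends in $\ldots,1,0$, so $\SG(a_m-2)=1$; the division option $a_m/k=a_{m-1}+2$ is the second entry of $C_{m-1}$ (or of $Y$ when $m=1$), which equals $0$. Hence $\SG(a_m)=\mex\{1,0\}=2$, and the identical computation using $c_m/k=c_{m-1}+2\in A_m$ gives $\SG(c_m)=2$. Next I would treat the bulk of each interval $A_m,C_m$. At positions not divisible by $k$ only the subtraction option is present, so the even-offset and odd-offset positions form two independent chains under $n\mapsto n-2$; the value-$2$ special point preceding the interval, together with the trailing $0$ of the previous interval, seeds these chains (first cell $\mapsto 1$, second cell $\mapsto 0$) so that each alternates $0,1,0,1,\ldots$, and their interleaving produces exactly the period-$4$ pattern $(1,0,0,1)$. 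This simultaneously explains why each $A_m$ and $C_m$ has length $\equiv 2\pmod 4$ and hence terminates in the suffix $1,0$ before the next value-$2$ point.

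The heart of the argument, and the main obstacle, is showing that the division move never disturbs this pattern at the multiples of $k$ inside $A_m$ and $C_m$. Writing $p(n)\in\{0,1\}$ for the pattern value, one has $\SG(n)=\mex\{\SG(n-2)\}=p(n)$ from subtraction alone; a short case check shows that adjoining the option $\SG(z)$ leaves the mex unchanged precisely when $\SG(z)\neq p(n)$. As $n$ runs through successive multiples of $k$, the quotient $z=n/k$ runs through consecutive integers, and since $k\equiv 1\pmod 4$ one has $n\equiv z\pmod 4$; feeding this through the recursions $a_m=k(a_{m-1}+2)$ and $c_m=k(c_{m-1}+2)$ shows that the phase of $n$ within $A_m$ (resp.\ $C_m$) exceeds the phase of $z$ within $A_{m-1}$ (resp.\ $C_{m-1}$) by exactly $2\pmod 4$. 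Because the pattern $(1,0,0,1)$ shifted by $2$ is its own complement $(0,1,1,0)$, the division target always carries the value opposite to $p(n)$, so the mex is indeed unchanged.

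The one subtlety to handle with care is that, as $n$ sweeps $A_m$, the quotient $z$ does not remain inside $A_{m-1}$: it exits through the special point $a_{m-1}$, where $\SG(z)=2\neq p(n)$ harmlessly, and then enters the first cells of $C_{m-1}$. I would verify that the same offset-$2$ relation persists across this crossing by recomputing the two phases there, and symmetrically for $C_m$ with the roles of $A$ and $C$ exchanged. Once this phase bookkeeping is confirmed at the interior multiples, at the boundary special points, and at the interval transitions, the inductive step is complete and the theorem follows.
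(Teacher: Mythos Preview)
Your approach is essentially the paper's: induction on $n$, with the subtraction chain fixing the period-$4$ pattern at non-multiples of $k$ and a phase-synchronization argument showing that the division option never changes the $\mex$ at multiples of $k$. Your formulation ``the division target carries the value opposite to $p(n)$'' and the paper's ``the division option goes through exactly the same values as the subtraction option'' are equivalent, since $\SG(n-2)=1-p(n)$ on these intervals.

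There is, however, a real gap in the base case. You stop the direct verification at $n\le c_0$ and then run the inductive step for all $m\ge 1$, with the quotient $z=n/k$ landing in $A_{m-1}$ or $C_{m-1}$ (plus a brief crossing of one special point). But for $m=1$ there are no $A_0,C_0$. For $n\in A_1$ the quotients run over $[5,a_0+1]$, sitting in $X$---whose pattern is $(0,0,1,1)$, not $(1,0,0,1)$---before crossing $a_0$ into $Y$; your offset-$2$ phase computation, which was anchored at $c_{m-2}+1$, no longer applies as written. Worse, for $n\in C_1$ the quotients run over $[a_0+3,c_0+1]$ and pass successively through $Y$, the extra special point $b$, then $Z$ (again pattern $(0,0,1,1)$), then $c_0$, and finally into $A_1$; so there are two special points to cross and two different pattern blocks to match, not one. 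None of this is covered by ``symmetrically for $C_m$ with the roles of $A$ and $C$ exchanged.''

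The paper avoids all of this simply by extending the tedious direct verification through $n\le c_1-1$; the uniform inductive step then starts at $m\ge 2$, where $A_{m-1}$ and $C_{m-1}$ exist and carry the $(1,0,0,1)$ pattern, and your argument goes through unchanged.
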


\begin{proof}
It is tedious but straightforward to verify the claim up to $n=c_1-1$.

If $n$ equals $a_m$ (resp.~$c_m$), then the subtraction option of $n$ has SG value $1$, while its division option is the second element of $C_{m-1}$ (resp.~$A_m$), which has SG value $0$. Hence, $\SG(n) = 2$.

Suppose $n$ belongs to $A_m$ or $C_m$ for $m\ge 2$. If $n$ is not divisible by $k$, then only the subtraction option of $n$ is present and the claim is readily verified. Hence, suppose $n$ is divisible by $k$. The SG sequences of $A_m$ and $C_m$ are periodic with period $4$, and $k\equiv 1 \pmod 4$, so advancing $n$ by $k$ steps is equivalent to advancing it by one step. Hence, starting from $n=c_{m-1}+k\in A_m$ or $n=a_m+k\in C_m$, and advancing $n$ in steps of $k$, the subtraction option of $n$ goes through the values $0,1,1,0,\ldots$, and its division option goes through exactly the same values, implying that $\SG(n)$ goes through the values $1,0,0,1,\ldots$, as desired.

If $n=a_m-2k\in A_m$ (resp.~$n=c_m-2k\in C_m$), then the subtraction option of $n$ has SG value $0$, while its division option is $a_{m-1}$ (resp.~$c_{m-1}$), which has SG value $2$. Hence, $\SG(n) = 1$, as desired.

Finally, if $n=a_m-k\in A_m$ (resp.~$n=c_m-k\in C_m$), then the subtraction option of $n$ has SG value $1$, while its division option is the first element of $C_{m-1}$ (resp.~$A_m$), which also has SG value~$1$. Hence, $\SG(n) = 0$, as desired.
\end{proof}

We conclude this section with a conjecture supported by our experiments:

\begin{conjecture}
Let $n$ be any position in the game i-\textsc{Mark($\{s\},\{d\}$)} in normal play, where $s$ and $d$ are relatively prime. Then a necessary condition to have $\SG(n) = 2$ is for one of the following properties to hold:
\begin{align*}
   &n = s d, \\
   &n\in (a_n) \text{ where } a_0 = 2 d s \text{ and } a_{i + 1} = d (a_i + s), \\
   &n\in (b^1_n) \text{ where } b^1_0 = d \text{ and } b^1_{i + 1} = d (b^1_i + s), \\
   &\;\;\vdots \notag\\
   &n\in (b^{s - 1}_n) \text{ where } b^{s - 1}_0 = (s - 1)d \text{ and } b^{s - 1}_{i + 1} = d (b^{s - 1}_i + s).
\end{align*}
The rest of the SG values iterate in $s$-tuples of zeros and ones.
\end{conjecture}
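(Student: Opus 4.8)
The plan is to attempt the stronger statement that fully describes the SG sequence; the claimed necessary condition then follows immediately. The starting observation is the one already exploited for $s=2$: every position has at most the two options $n-s$ and $n/d$, so $\SG(n)\le 2$, and by the mex rule $\SG(n)=2$ if and only if both options are present (that is, $n\ge s$ and $d\mid n$) and $\{\SG(n-s),\SG(n/d)\}=\{0,1\}$. Thus it suffices to show that the SG sequence is a concatenation of blocks, each a run of the period-$2s$ pattern $0^s1^s$ in some phase, with the listed positions $sd$, $(a_i)$, and $(b^1_i),\dots,(b^{s-1}_i)$ carrying the isolated value $2$ between the blocks. I would carry out the argument by strong induction on $n$, after verifying a suitable finite initial segment by hand.

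The inductive step splits according to divisibility by $d$. When $d\nmid n$, only the subtraction option is present, and the key elementary fact is that subtracting $s$ flips a bit of the period-$2s$ pattern: in $0^s1^s$ the values at $n$ and $n-s$ are always opposite, so $\SG(n)=\mex\{\SG(n-s)\}$ reproduces exactly the pattern value at $n$, and the pattern is self-sustaining away from multiples of $d$. When $d\mid n$, I would use the defining recursions $x_{i+1}=d(x_i+s)$, equivalently $x_{i+1}/d=x_i+s$, which say that the division option of a candidate position lands exactly $s$ steps past the previous candidate of its family; this self-similarity propagates the value $2$ from block $m-1$ into block $m$ and pins down the phase of each new block. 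For a multiple of $d$ that is \emph{not} one of the listed positions, the goal is to show that the division option $n/d$ either carries the same bit as the subtraction option $n-s$ (so $\mex$ returns the opposite bit, matching the pattern) or carries the value $2$ (which, together with a bit, again returns the complementary bit); in either case no new $2$ is created and the pattern continues.

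The heart of the proof---and the reason the statement is offered only as a conjecture---is controlling the division option throughout an entire block. As $n$ ranges over the multiples of $d$ inside a block, the quotients $n/d$ sweep a contiguous sub-run of the previous block one step at a time, while the subtraction witnesses $n-s$ advance by $d$ steps, i.e.\ by $d\bmod 2s$ in phase. Keeping these two walks synchronized requires understanding the permutation of the $2s$ residues induced by multiplication by $d$, and in particular how $d\bmod 2s$ fixes the relative phase; note that $\gcd(d,2s)$ is $1$ or $2$ according as $d$ is odd or even, so when $d$ is even this map is not even a bijection mod $2s$. For $s=2$ this reduces to the clean dichotomy $d\equiv\pm1\pmod 4$ settled in the preceding two theorems, but for general coprime $s,d$ the residue $d\bmod 2s$ takes many values, so the alignment is genuinely more delicate; establishing it uniformly---together with the correct phase shift imposed at each special position and its persistence across the geometrically growing blocks---is the main obstacle. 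A natural intermediate step is to first generalize the $s=2$ proofs to the two extreme cases $d\equiv 1$ and $d\equiv -1\pmod{2s}$, and then either reduce the remaining residues to these or carry out an explicit case analysis on $d\bmod 2s$.
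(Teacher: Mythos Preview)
The statement you are addressing is a \emph{conjecture} in the paper, not a theorem; the paper offers no proof of it, only the remark that it is ``supported by our experiments.'' So there is nothing to compare your attempt against. Your proposal is also not a proof: as you yourself say, ``the heart of the proof---and the reason the statement is offered only as a conjecture---is controlling the division option throughout an entire block,'' and you do not carry this out. What you have written is a reasonable outline of how one might try to attack the problem, together with an honest diagnosis of where it gets stuck; it is not a proof, and it is not claimed to be one.

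A couple of remarks on the outline itself. First, you are aiming at a statement strictly stronger than the conjecture: the paper only asserts a \emph{necessary} condition for $\SG(n)=2$ and the vague clause that ``the rest of the SG values iterate in $s$-tuples of zeros and ones,'' whereas you are trying to pin down the entire SG sequence block by block. That is a sensible choice, since (as the $s=2$ theorems show) the necessary condition is unlikely to be provable without simultaneously establishing the full pattern. Second, your identification of the obstruction---the phase alignment between the walk $n\mapsto n-s$ (stepping by $d\bmod 2s$ in phase) and the walk $n\mapsto n/d$ (stepping by $1$)---is exactly the phenomenon that forces the $k\equiv 1$ versus $k\equiv 3\pmod 4$ split in the paper's $s=2$ analysis, so your suggested first step of treating $d\equiv\pm 1\pmod{2s}$ is the natural one. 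But until that alignment is actually established for general coprime $s,d$, the conjecture remains open, both in the paper and in your write-up.
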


\section{\boldmath Some results on \boldimark\texorpdfstring{$(\{1\},\{2,3\})$}{(\{1\},\{2,3\})}}\label{sec_123}

In this section we take the first steps towards studying $\imark$ games with more than one division option.
Specifically, we look at the game $\imark(\{1\},\{2,3\})$, which is arguably the simplest such case. The SG sequence of this game is composed of the values $0,1,2,3$; however, the sequence seems chaotic and does not show any clear pattern.

We used a computer program to calculate the maximum gap between consecutive appearances of each SG value. See Table~\ref{tab:max_gap}.

\begin{table}
\centering
\begin{tabular}{r|c c c c} 
 SG value    & 0 & 1 & 2  & 3 \\\hline 
 maximum gap & 4 & 8 & 19 & 240
\end{tabular}
\caption{Maximum gap for each $\SG$ value for $n \leq 2^{31}-1$ in $\imark(\{1\},\{2,3\})$.}
\label{tab:max_gap}
\end{table}

These computer experiments seem to indicate that for each $0\le i\le 3$ there exists a constant $c_i$ such that every interval $[n+1,n+c_i]$ contains at least one position with SG value $i$. In this section we prove the existence of the constants $c_0, c_1, c_2$, and leave the existence of $c_3$ as an open problem. For $c_0$ we obtain the optimal value $c_0=4$.

The options of a position $n$ in this game are determined by the remainder of $n$ modulo $6$. Recall that $\SG(n) \leq \lvert \opt(n) \rvert$ for every $n$, and see Table~\ref{tab:mod_options}.

\begin{table}
\centering
\begin{tabular}{c|c|c} 
 $n\bmod 6$         & options & max $\SG(n)$ \\ 
  \hline
 0 & $n - 1, \frac{n}{2}, \frac{n}{3}$  & $3$ \\
 1 & $n - 1$ & $1$  \\
 2 & $n - 1, \frac{n}{2}$ & $2$  \\
 3 & $n - 1, \frac{n}{3}$& $2$   \\
 4 & $n - 1, \frac{n}{2}$& $2$   \\
 5 & $n - 1$ & $1$
\end{tabular}
\caption{Options for each position in $\imark(\{1\},\{2,3\})$.}
\label{tab:mod_options}
\end{table}

\begin{theorem}\label{gap_0}
Let $n > 0$ be a position in $\imark(\{1\},\{2,3\})$. Then there exists $i\in\N$, $1 \leq i \leq 4$ such that $\SG(n - i) = 0$.
\end{theorem}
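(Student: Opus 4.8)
The plan is to show that among the four consecutive positions $n-1,n-2,n-3,n-4$ at least one is a $P$-position, by a case analysis on $n\bmod 6$ supported by two elementary facts about the SG sequence. The first fact is that, since $1\in S$, every position $p\ge 1$ has $p-1$ as an option; consequently no two consecutive positions can both have SG value $0$, because if $\SG(p)=0$ then $p$ is an option of $p+1$ and so $\SG(p+1)\ge 1$. The second fact is that if $p\equiv 1$ or $p\equiv 5\pmod 6$, then by Table~\ref{tab:mod_options} the only option of $p$ is $p-1$, so $\SG(p)=0$ whenever $\SG(p-1)\ne 0$, and $\SG(p)=1$ otherwise.

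Next I would observe that the residues of $n-1,\dots,n-4$ modulo $6$ form four consecutive residues, and dispose of the five cases $n\equiv 0,1,2,3,4\pmod 6$ uniformly. In each of these, the window $\{n-4,\dots,n-1\}$ contains a consecutive pair $p-1,p$ with $p\equiv 1$ or $5\pmod 6$ (for instance $p=n-1$ when $n\equiv 0$, and $p=n-3$ when $n\equiv 4$). By the second fact, either $\SG(p)=0$, or else $\SG(p-1)=0$; since both $p$ and $p-1$ lie in the window, a $P$-position is found either way.

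The main obstacle is the remaining case $n\equiv 5\pmod 6$, where the only window position with residue in $\{1,5\}$ is $n-4\equiv 1$, whose forced predecessor $n-5$ lies just outside the window, so the previous argument breaks down. Here I would argue by contradiction. Writing $n=6m+5$ and assuming all four of $\SG(n-1),\dots,\SG(n-4)$ are nonzero, I would propagate the mex computation up through the window. The second fact forces $\SG(n-4)=1$. Then $n-3\equiv 2$ has options $n-4$ and $(n-3)/2=3m+1$, so $\SG(n-3)\ne 0$ forces $\SG(3m+1)=0$ and hence $\SG(n-3)=2$. Then $n-2\equiv 3$ has options $n-3$ and $(n-2)/3=2m+1$, so $\SG(n-2)\ne 0$ forces $\SG(2m+1)=0$ and hence $\SG(n-2)=1$. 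Finally $n-1\equiv 4$ has options $n-2$ and $(n-1)/2=3m+2$, so $\SG(n-1)\ne 0$ forces $\SG(3m+2)=0$.

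The contradiction is now immediate: $3m+1$ and $3m+2$ are consecutive positions both with SG value $0$, violating the first fact. This completes the case $n\equiv 5\pmod 6$ and hence the theorem. I expect the delicate propagation in this last case to be the only nonroutine part, precisely because it must convert the failure of the ``nearby residue-$1$'' argument into a constraint that collides with the no-consecutive-zeros principle; the examples $19\to 23$ and $36\to 40$ in the computed sequence show that the bound $c_0=4$ is attained, so the constant cannot be improved.
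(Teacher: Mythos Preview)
Your argument is correct and follows essentially the same route as the paper: reduce to a residue-$\{1,5\}$ position in the window for $n\not\equiv 5\pmod 6$, and for $n\equiv 5\pmod 6$ propagate the mex through $n-4,n-3,n-2,n-1$ to force $\SG((n-3)/2)=\SG((n-1)/2)=0$, contradicting the no-consecutive-zeros fact. Two minor remarks: you never explicitly dispose of $1\le n\le 4$ (where some of $n-1,\dots,n-4$ are not positions; the paper handles this with the one-line observation $\SG(0)=0$), and your derivation of $\SG(2m+1)=0$ from the $(n-2)/3$ option is correct but unused in the final contradiction.
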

\begin{proof}
Let $n > 0$. We clearly have $\SG(0) = 0$, which implies that for $n \leq 4$ the Theorem holds. Now let $n > 4$ and let $i>0$ be smallest such that $n - i \equiv 1 \pmod 6$ or $n - i \equiv 5 \pmod 6$. Hence,
\begin{equation*}
 i = \begin{cases}
    1 ; & n \equiv 0 \pmod 6, \\
    2 ; & n \equiv 1 \pmod 6, \\
    1 ; & n \equiv 2 \pmod 6, \\
    2 ; & n \equiv 3 \pmod 6, \\
    3 ; & n \equiv 4 \pmod 6, \\
    4 ; & n \equiv 5 \pmod 6.
  \end{cases}
\end{equation*}
By Table \ref{tab:mod_options}, $\opt(n - i) = \{ n - i - 1 \}$, so $\SG(n - i) \leq 1$. This implies that either $\SG(n - i) = 0$ or $\SG(n - i - 1) = 0$. Therefore the maximum gap is at most $i + 1$.
Hence, we have proven the claim for all cases except for $n \equiv 5 \pmod 6$.

Now let $n \equiv 5 \pmod 6$, and suppose for a contradiction that $\SG(n-i)\neq 0$ for all $1\le i\le 4$. Since $n-4\equiv 1 \pmod 6$, by Table \ref{tab:mod_options} we must have  so $\SG(n - 4) = 1$. Hence, we must have $\SG(n-3) = 2$, and so $\SG(n-2) = 1$, and $\SG(n-1)=2$. But then $\SG((n-3)/2) = \SG((n-1)/2) = 0$, which is a contradiction.
\end{proof}

\begin{theorem}
Let $n > 1$ be a position in $\imark(\{1\},\{2,3\})$. Then there exists $1 \leq i \leq 10$ such that $\SG(n - i) = 1$.
\end{theorem}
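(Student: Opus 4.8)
The plan is to argue by contradiction: assume that some window of ten consecutive positions contains no position of SG value $1$, and derive an impossibility. The engine of the proof is a single universal fact: since $1\in S$, every position $x+1$ has $x$ as an option, so $\SG(x)\neq\SG(x+1)$ for all $x\ge 0$. In particular, no two consecutive positions can both have SG value $1$ (and no two consecutive positions can both have value $0$ either). I would combine this with the observation, read off from Table~\ref{tab:mod_options}, that a position $\equiv 1$ or $5\pmod 6$ has a single option and hence $\SG\le 1$, so such a position has SG value $1$ exactly when its predecessor has SG value $0$.

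First I would set up a combinatorial reduction. The claim is that every window $[n-10,n-1]$ contains a full run $6k+1,6k+2,6k+3,6k+4$ of positive integers: the seven integers $[n-10,n-4]$ always contain one that is $\equiv 1\pmod 6$, which serves as $6k+1$. (For the finitely many small $n$ where the window dips below $1$, I would simply verify the statement against the explicitly computed initial SG values; the run $1,2,3,4$ already handles all $5\le n\le 11$.) Thus it suffices to show that, for any such run, at least one of $6k+1,\dots,6k+4$ has SG value $1$.

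Next comes the main chain, carried out under the contrary assumption that none of these four positions (nor anything else in the window) has SG value $1$. Since $6k+1\equiv 1\pmod 6$ is single-option, it is forced to $\SG(6k+1)=0$. For $6k+2\equiv 2$, whose options are $6k+1$ (value $0$) and $3k+1$, the quantity $\mex\{0,\SG(3k+1)\}$ equals $1$ unless $\SG(3k+1)=1$, so avoiding value $1$ at $6k+2$ forces $\SG(3k+1)=1$ and $\SG(6k+2)=2$. Then $6k+3\equiv 3$ has options $6k+2$ (value $2$) and $2k+1$, so its value lies in $\{0,1\}$ and is forced to $0$. Finally $6k+4\equiv 4$ has options $6k+3$ (value $0$) and $3k+2$, and by the identical $\mex$ computation, avoiding value $1$ here forces $\SG(3k+2)=1$. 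But $3k+1$ and $3k+2$ are consecutive integers now both of SG value $1$, contradicting the universal fact. Hence one of the four run positions does carry value $1$, completing the proof.

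I expect the main conceptual obstacle to be precisely the step that makes the argument work: locating the contradiction. The division options $n/2,n/3$ generally point to far-away positions whose SG values are entirely outside our control (this is exactly why the full sequence looks chaotic), so no contradiction can be forced among them directly. The key insight is that the two ``wide'' positions $6k+2$ and $6k+4$ flanking the forced zero at $6k+3$ have halving options $3k+1$ and $3k+2$ that are consecutive, so their forced common value $1$ is excluded by the purely local no-two-equal-neighbors property rather than by any global information. Aligning the window bookkeeping (the guaranteed run of four and the handful of base cases) with this chain is the only remaining technical chore.
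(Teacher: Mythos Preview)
Your argument is correct and is essentially the paper's own proof: both locate a residue-aligned block inside the length-$10$ window, force the even positions there to have SG value $2$, and obtain the contradiction $\SG(3k+1)=\SG(3k+2)=1$ for consecutive integers. The only cosmetic difference is that you anchor at $6k+1$ and work upward through four positions, whereas the paper anchors at $n-r-1\equiv 5\pmod 6$ and works downward through five; the resulting contradiction is literally the same pair of halved positions.
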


\begin{figure}
\centerline{\includegraphics{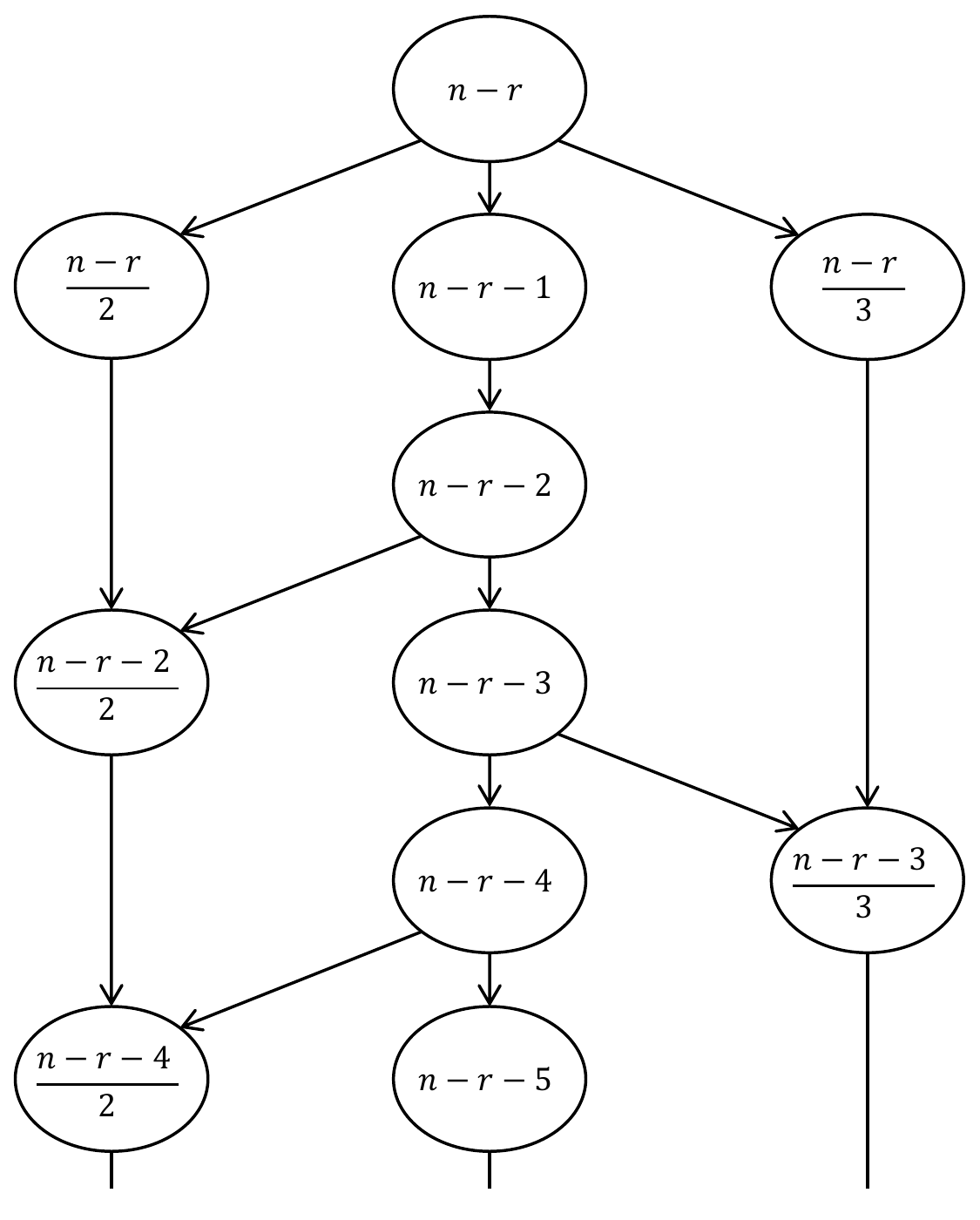}}
\caption{\label{fig_123}Positions reachable from position $n - r$ where $n \equiv r \pmod 6$}
\end{figure}

\begin{proof}
Since $\SG(1) = 1$, the claim holds for $n \leq 11$, so let $n > 11$. Let $r = n \bmod 6$. We will show that $\SG(n-r-i) = 1$ for some $1\le i\le 5$, so suppose for a contradiction that this is not the case. See Figure~\ref{fig_123}.

We have $(n - r - 1) \equiv 5 \pmod 6$ so by Table \ref{tab:mod_options} we must have $\SG(n - r - 1) = 0$. Then $\SG(n - r - 2) = 2$, $\SG(n - r - 3) = 0$, $\SG(n - r - 4) = 2$, and $\SG(n-r-5)=0$. But then $\SG((n - r - 2) / 2) = \SG((n - r - 4) / 2) = 1$, which is a contradiction.
\end{proof}

\begin{theorem}
Let $n > 3$ be a position in $\imark(\{1\},\{2,3\})$. Then there exists $1 \leq i \leq 49$ such that $\SG(n - i) = 2$.
\end{theorem}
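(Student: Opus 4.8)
The plan is to mimic the structure of the two preceding proofs (for the gaps of values $0$ and $1$), which both proceed by contradiction: assume that the value in question fails to appear in a window of prescribed length, and then use Table~\ref{tab:mod_options} together with the $\mex$ rule to force a rigid local pattern of SG values that eventually collides with itself through the division moves. Since the value $2$ can only occur at positions $n$ with $n\bmod 6\in\{0,2,3,4\}$ (those with at least two options), I would first reduce to controlling the positions of the form $6j$, $6j+2$, $6j+3$, $6j+4$, and observe that a value of $2$ at such a position is forced precisely when both of its options have SG values $\{0,1\}$ in some order. Conversely, the assumption ``$\SG(n-i)\neq 2$ for all $1\le i\le 49$'' means that within the window, \emph{every} position with two or three options must avoid having its option-set equal to a superset of $\{0,1\}$ — i.e.\ the two smaller values $0,1$ must not both be ``excluded from above'' at any such position.

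First I would anchor the argument at a position congruent to $5\pmod 6$ inside the window, as in the previous theorem, since such positions have a single option and hence $\SG\le 1$, giving a foothold. From there I would propagate forward the forced values: knowing the SG values $0,1$ are each gapless on short scales (Theorems~\ref{gap_0} and the value-$1$ theorem give windows of length $4$ and $10$ respectively, which I may freely invoke), the absence of $2$'s over a long stretch forces the sequence on the single-option and two-option positions into a highly constrained alternating pattern of $0$'s and $1$'s. The key mechanism, exactly as before, is that the halving and thirding moves relate positions that are roughly a factor of $2$ or $3$ apart; within a window of length $49$ the relevant division options $n/2, n/3$ from positions near the top of the window land near the bottom of the window (or just below it), so their SG values are already pinned down by the forced pattern. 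I would then exhibit some position $m$ in the window, of the form $6j$, $6j+2$, $6j+3$ or $6j+4$, whose two options are forced to carry the values $0$ and $1$ — which makes $\SG(m)=2$, contradicting the assumption.

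The main obstacle, and the reason the constant jumps to $49$, is that unlike the value-$0$ and value-$1$ cases (where a single halving relation closes the argument), forcing a $2$ requires simultaneously controlling \emph{both} option values of some position, and the two options can be a subtraction ($n-1$) and a division ($n/2$ or $n/3$) whose targets lie far apart. The factor-of-$3$ moves in particular mean that to pin down the division option of a position near $n$ I may need the pattern to be already determined a factor of $3$ earlier, so the window must be long enough for the forced alternating structure to persist across a full thirding, i.e.\ on the order of $3\cdot(\text{short gap})\cdot(\text{period }6)$, which is consistent with $49\approx 3\cdot 16$. I would therefore expect the hard part to be a careful bookkeeping of several residue cases modulo $6$ for the top of the window, checking in each case that the combination of the $5\pmod 6$ anchor, the forced $0/1$ alternation, and one halving or thirding relation produces a position whose option-set contains both $0$ and $1$. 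I anticipate that one residue case (likely the one where the only available short relation is a thirding) is tight and dictates the value $49$, while the others close with room to spare; isolating that worst case and verifying its arithmetic is where the real work lies.
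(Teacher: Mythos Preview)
Your outline matches the paper's strategy at a very high level --- assume a long $2$-free window, anchor at a position $\equiv 5\pmod 6$, force a rigid $0/1$ pattern, and close via a division relation --- but what you have written is a plan, not a proof. All of the actual content is missing, and some of your guesses about how the details should go are off in ways that would cost you time if you pursued them.

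Concretely, the paper does \emph{not} do a case split on $n\bmod 6$ at the top of the window. Instead it reduces modulo $36$: set $r=n\bmod 36$, so that $n-r\equiv 0\pmod{36}$ and the positions $n-r-1,\ldots,n-r-11$ all lie inside the window $[n-49,n-1]$ (this is the real source of the constant $49=35+14$, not your heuristic $3\cdot 16$). A preliminary lemma --- which you do not identify --- is the engine: if $m\equiv 5\pmod 6$ and $\SG(m-i)\neq 2$ for $0\le i\le 7$, then $\SG(m)=0$ (proved by forcing an eight-term $1,0,1,0,\ldots$ pattern whose four halved positions would all be nonzero, contradicting Theorem~\ref{gap_0}). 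Applying this lemma at $m=n-r-1$ and $m=n-r-7$ pins down $\SG$ on $n-r-1,\ldots,n-r-11$ as a fixed $0/1$ string. The contradiction then requires descending \emph{two} halving levels, not one: one shows $\SG((n-r-4)/2)=\SG((n-r-8)/2)=2$, hence $\SG((n-r-8)/4)=1$, which forces $\SG((n-r-6)/2)=1$; combined with $\SG(n-r-7)=0$ this makes $\SG(n-r-6)=3$, so $\SG((n-r-6)/3)=2$, whence $\SG((n-r-3)/3)=0$, and finally the options of $n-r-3$ are $\{0,1\}$, giving $\SG(n-r-3)=2$ --- contradicting the forced value $0$.

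None of this chain is present in your proposal, and your anticipated mechanism (``one halving or thirding relation'' per residue case) underestimates the depth needed: the argument genuinely requires controlling positions at the $n/4$ scale before the thirding move can be invoked. So the gap is not a wrong idea but an absent one: you have the skeleton of the paper's approach without any of the bones that make it stand.
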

\begin{proof}
Since $SG(3) = 2$, the claim holds for $n \leq 52$, so let $n > 52$. We first prove the following Lemma:
\begin{lemma}\label{temp_gap}
Suppose $m \equiv 5 \pmod 6$ and suppose $\SG(m - i) \neq 2$ for all $0 \leq i \leq 7$. 
Then $\SG(m) = 0$. 
\end{lemma}
\begin{proof}
Suppose by contradiction that $\SG(m) \neq 0$. By Table \ref{tab:mod_options} we must have $\SG(m) = 1$, $\SG(m - 1) = 0$, $\SG(m - 2) = 1$, $\SG(m - 3) = 0$, $\SG(m - 4) = 1$, $\SG(m - 5) = 0$, $\SG(m - 6) = 1$, $\SG(m - 7) = 0$. Therefore $\SG((m - 1 - 2j) / 2) \neq 0$ for all $0 \leq j \leq 3$. This contradicts Theorem \ref{gap_0}.
\end{proof}

Now given $n > 52$, suppose for a contradiction that $\SG(n - i) \neq 2$ for all $1 \leq i \leq 49$. Let $r = n \bmod 36$. Hence $r \leq 35$ and $n - r \equiv 0 \pmod{36}$. By Lemma \ref{temp_gap} we have both $\SG(n - r - 1) = 0$ and $\SG(n - r - 7) = 0$. Furthermore we must have $\SG(n - r - 2) = 1$, $\SG(n - r - 3) = 0$, $\SG(n - r - 4) = 1$, $\SG(n - r - 5) = 0$, $\SG(n - r - 8) = 1$, $\SG(n - r - 9) = 0$, $\SG(n - r - 10) = 1$, $\SG(n - r - 11) = 0$. Moreover $(n - r - 2)/2 \equiv 5 \pmod 6$ so by Table \ref{tab:mod_options} $\SG((n - r - 2)/2) = 0$. Similarly $(n - r - 10)/2 \equiv 1 \pmod 6$ so $\SG((n - r - 10)/2) = 0$.
Therefore $\SG((n - r - 4)/2) = 2$ and $\SG((n - r - 8)/2) = 2$, and therefore
\begin{equation}\label{nr84}
    \SG\left(\frac{n - r - 8}{4}\right) = 1.
\end{equation}

Now we claim that $\SG((n - r - 6)/2) = 1$. Indeed, otherwise we would have $\SG((n - r - 6)/2) = 0$,  $\SG((n - r - 4)/4) = 1$ contradicting (\ref{nr84}).

Therefore $\SG(n - r - 6) = 3$ and $\SG((n - r - 6)/3) = 2$ and $\SG((n - r - 3)/3) = 0$, since the only successor of $(n - r - 3) / 3$ is $(n - r - 6) / 3$. This contradicts the fact that $\SG(n - r - 3) = 0$.
\end{proof}

\section{Discussion}

A precise characterization of the SG function of $\imark(\{s\},\{d\})$ for $s,d$ relatively prime is still missing.

Regarding the game $\imark(\{1\},\{2,3\})$, the existence of the constant $c_3$ could probably be established in a very tedious way by a similar but more extensive case analysis. In any case, our partial results on $\imark(\{1\},\{2,3\})$ do not provide any concrete help in playing this game. A more pressing open problem is to find a polynomial-time algorithm for its SG function.

\paragraph{Acknowledgements.} One of the referees carried out the computation of Table~\ref{tab:max_gap} up to $n=943\,700\,000\,000$, using a Mac Pro with 1.5TB of RAM, and found that the maximum gaps did not change. We thank the referees for this and their other helpful comments.

\bibliographystyle{plainurl}
\bibliography{some}

\end{document}